\documentclass[final,onefignum,onetabnum,letterpaper]{siamonline190516}
\usepackage[scale=0.8]{geometry} 

\usepackage{lipsum}
\usepackage{amsfonts}
\usepackage{epstopdf}
\ifpdf
  \DeclareGraphicsExtensions{.eps,.pdf,.png,.jpg}
\else
  \DeclareGraphicsExtensions{.eps}
\fi

\usepackage{datetime}
\newdateformat{monthyeardate}{%
  \monthname[\THEMONTH] \THEDAY, \THEYEAR}

\usepackage{academicons}
\usepackage{xcolor}
\renewcommand{\orcid}[1]{\href{https://orcid.org/#1}{\textcolor[HTML]{A6CE39}{orcid.org/#1}}}

\usepackage{amsmath}
\allowdisplaybreaks
\usepackage{amssymb}
\usepackage{commath}
\usepackage{mathtools}
\usepackage{bbm}

\usepackage{color}
\usepackage{graphicx}
\usepackage[small]{caption}
\usepackage{subcaption}

\usepackage{relsize}
\usepackage{adjustbox}
\usepackage{algorithm}
\usepackage[noend]{algpseudocode}
\usepackage{booktabs}
\usepackage{tikz}

\usepackage{verbatim}

\usepackage{cleveref}

\usepackage{pdflscape}
\usepackage{afterpage}
\usepackage{capt-of}

\usepackage{enumitem}
\setlist[enumerate]{leftmargin=.5in}
\setlist[itemize]{leftmargin=.5in}

\newsiamthm{problem}{Problem}
\newsiamremark{remark}{Remark}
\newsiamremark{hypothesis}{Hypothesis}
\crefname{hypothesis}{Hypothesis}{Hypotheses}
\newsiamremark{example}{Example}
\newsiamthm{claim}{Claim}
\newsiamthm{conjecture}{Conjecture}

\headers{Why summation by parts is not enough}{Glaubitz, Iske, Lampert, and \"Offner}

\title{Why summation by parts is not enough
\thanks{
\monthyeardate\today
\corresponding{Joshua Lampert}
}}

\author{
Jan Glaubitz\thanks{Department of Mathematics, Linköping University, Sweden
(\email{jan.glaubitz@liu.se}, \orcid{0000-0002-3434-5563})
}
\and
Armin Iske\thanks{Department of Mathematics, University of Hamburg, Germany
(\email{armin.iske@uni-hamburg.de}, \orcid{0000-0003-1743-5484} and \email{joshua.lampert@uni-hamburg.de}, \orcid{0009-0007-0971-6709})
}
\and
Joshua Lampert\footnotemark[3]
\and
Philipp \"Offner\thanks{
Institute of Mathematics, Clausthal University of Technology, Germany
(\email{mail@philippoeffner.de}, \orcid{0000-0002-1367-1917})
}
}

\usepackage{amsopn}

\DeclareMathOperator{\rank}{rank}
\DeclareMathOperator{\diag}{diag}

\DeclareMathOperator{\spn}{span} 

\DeclareMathOperator{\nul}{null} 
\DeclareMathOperator{\spec}{spec}
\newcommand{\scp}[2]{\left\langle{#1, #2}\right\rangle}

\renewcommand{\dim}{\mathrm{dim} \,}

\newcommand{\R}{\mathbb{R}}
\renewcommand{\O}{\mathcal{O}}
\newcommand{\F}{\mathcal{F}}
\newcommand{\G}{\mathcal{G}}

\usepackage{lineno}

\ifpdf
\hypersetup{
  pdftitle={SBP not enough},
  pdfauthor={Jan Glaubitz}
}
\fi

\begin{document}

\maketitle

\begin{abstract}
	We investigate the construction and performance of summation-by-parts (SBP) operators, which offer a powerful framework for the systematic development of
structure-preserving numerical discretizations of partial differential equations. Previous approaches for the construction of SBP operators have usually relied on either
local methods or sparse differentiation matrices, as commonly used in finite difference schemes. However, these methods often impose implicit requirements that
are not part of the formal SBP definition. We demonstrate that adherence to the SBP definition alone does not guarantee the desired accuracy, and we identify conditions
for SBP operators to achieve both accuracy and stability. Specifically, we analyze the error minimization for an augmented basis, discuss the role of sparsity, and
examine the importance of nullspace consistency in the construction of SBP operators. Furthermore, we show how these design criteria can be integrated into a recently proposed
optimization-based construction procedure for function space SBP (FSBP) operators on arbitrary grids. Our findings are supported by numerical experiments that illustrate the
improved accuracy for the numerical solution using the proposed SBP operators.
\end{abstract}

\begin{keywords}
	Summation-by-parts operators, general function spaces, initial boundary value problems, stability
\end{keywords}

\begin{AMS}
	65N12, 
    65D25 
\end{AMS}

\begin{Code}
    \url{https://github.com/JoshuaLampert/2026_SBP_not_enough}
\end{Code}

\begin{DOI}
	Not yet assigned
\end{DOI}

\section{Introduction}
\label{sec:introduction}

Summation-by-parts (SBP) operators have emerged as a powerful tool in the numerical approximation of partial differential equations (PDEs),
particularly conservation laws of the form
\begin{equation}\label{eq:CL}
	\partial_t \mathbf{u}(t,\mathbf{x}) + \nabla \cdot \mathbf{f}(\mathbf{u}) = 0,
	\quad t>0, \quad \mathbf{x} \in \Omega,
\end{equation}
with conserved variable $\mathbf{u}: \R_{>0} \times \Omega \to \R^m$, flux function $\mathbf{f}: \R^m \to \R^{m \times d}$, and $\Omega \subset \R^d$.
Conservation laws are fundamental in modeling a wide range of physical phenomena, including fluid dynamics, electromagnetism, and traffic flow.
Accurate and stable numerical methods for solving conservation laws are thus crucial for reliable simulations in science and engineering.
SBP operators mimic the integration-by-parts property at the discrete level, which is essential for
ensuring conservation and stability in numerical schemes.
Many classes of discretizations for hyperbolic conservation laws can be
formulated in terms of SBP operators, including finite difference (FD) \cite{kreiss1974finite,strand1994summation,mattsson2004summation},
finite volume \cite{nordstrom2001finite,nordstrom2003finite}, continuous Galerkin \cite{hicken2016multidimensional,hicken2020entropy,abgrall2020analysisI},
discontinuous Galerkin (DG) \cite{gassner2013skew,carpenter2014entropy,chan2018discretely}, radial basis function (RBF) \cite{glaubitz2024energy},
flux reconstruction \cite{ranocha2016summation,offner2018stability}, active flux \cite{barsukow2025stability,abgrall2025some}, and overset-grid \cite{glaubitz2025towards} methods. See also the review articles \cite{delreyfernandez2014review}
and \cite{svard2014review} for more details.

Traditionally, SBP operators have been constructed based on exactness with respect to polynomial bases. Recently, this has been extended to general finite-dimensional
function spaces \cite{glaubitz2023summation,glaubitz2023multi,glaubitz2025optimization} yielding function space based SBP (FSBP) operators, which we will build upon to
keep the discussion general.
In the design of SBP operators, usually different criteria, which go beyond the SBP definition, need to be considered.
However, these criteria are not always explicitly stated and generally vary between different applications and discretization methods.
Therefore, it is desirable to have an overview of these additional conditions and a general framework for the design of SBP operators that can be
tailored to specific applications.

\subsection*{Our contribution}
We provide a description and explanation of why the conditions in the definition of SBP operators alone are generally not sufficient for applications to
hyperbolic PDEs. To support this observation, we present numerical examples demonstrating the limitations of SBP operators
not respecting additional structure. These structures were implicit in previous construction procedures; in this paper we make these requirements explicit and discuss their relevance.
Furthermore, we discuss approaches to add additional conditions that make the resulting SBP operators well-suited
for solving hyperbolic conservation laws. We investigate two strategies how these can be incorporated into an existing construction procedure. Specifically, we investigate
sparsity to construct globally applicable FSBP operators (typically used in FD methods) and regularization techniques to improve accuracy of local FSBP operators (typically used in DG methods).
We restrict the discussion to SBP operators in one spatial dimension and use tensor products for higher dimensions.

\subsection*{Outline}
\Cref{sec:background} reviews the theory of SBP operators and existing construction methods.
Illustrative examples of SBP operators meeting the standard conditions but failing in practical applications are provided in \Cref{sec:counterexample},
which demonstrates the necessity for additional criteria.
\Cref{sec:optimization} discusses different strategies to enforce additional structure on SBP operators and how to incorporate these into a recently proposed
optimization-based construction method. It also presents numerical experiments demonstrating the effectiveness of the proposed approaches.
Finally, \Cref{sec:summary} summarizes the findings and outlines directions for future research.

\section{Background}
\label{sec:background}

For a set of one-dimensional nodes $X = \{x_i\}_{i = 1}^N$, we write $\mathbf{x} = [x_1, \ldots, x_N]^T$ and $\mathbf{u} = [u(x_1), \ldots, u(x_N)]^T$ for some function $u$, and $\mathbf{u}' = [u'(x_1), \ldots, u'(x_N)]^T$ for its derivative
as well as $\mathbf{e}_L = [1, 0, \ldots, 0]^T$ and $\mathbf{e}_R = [0, \ldots, 0, 1]^T$ for the left and right boundary basis vectors, respectively.

We base our discussion on the recently proposed extension of SBP operators to general finite-dimensional function spaces \cite{glaubitz2023summation,glaubitz2024energy,glaubitz2025optimization}.
\begin{definition}[FSBP operators]\label{def:FSBP}
	Let $\F \subset C^1([x_L, x_R])$ be a finite-dimensional function space.
	We say $D = P^{-1} Q$ is an \emph{$\F$-exact FSBP operator}, approximating  the first derivative operator $\partial_x$ on the node set $X$, if
	\begin{enumerate}
		\item[(i)]
		$D \mathbf{f} = \mathbf{f}'$ for all $f \in \F$,

		\item[(ii)]
		the norm matrix $P$ is symmetric positive definite, and

		\item[(iii)]
		$Q + Q^T = B = \mathbf{e}_R\mathbf{e}_R^T - \mathbf{e}_L\mathbf{e}_L^T$.

	\end{enumerate}
\end{definition}

The first condition ensures that the derivative is exact on the function space $\F$. For traditional SBP operators, $\F$ is usually chosen as the space of polynomials up to a certain degree.
The second condition ensures that $P$ defines a discrete inner product $\scp{\mathbf{u}}{\mathbf{v}}_P = \mathbf{u}^T P \mathbf{v}$ approximating $\int_{x_L}^{x_R} uv \, dx$.
The third condition is the SBP property, which mimics integration by parts discretely.

SBP operators can be grouped into two classes: (i) dense local operators, which are usually based on collocation and used
in finite element (FE) methods and (ii) sparse global operators, which are usually employed in FD methods.
This distinction is essential in the design of SBP operators, since the requirements for the two classes differ significantly.
Due to the collocation approach, the construction of polynomial SBP operators from the first class is usually straightforward,
when restricted to nodes that correspond to a Gaussian quadrature. Examples
include Gauss-Lobatto-Legendre operators \cite{gassner2013skew} or generalized SBP operators based on, e.g., Gauss-Legendre or
Gauss-Radau nodes, which do not include one or both boundary points. See \cite{kopriva2009implementing} for more details.

Classical FD-SBP operators have been constructed for various orders of accuracy and grid configurations, with
optimization strategies for the norm and derivative matrices described in \cite{mattsson2004summation,mattsson2014optimal,mattsson2018boundary}.
These procedures build upon classical FD approximations and result in derivative matrices $D$ with banded structures and boundary closures.
However, these classical FD-SBP operators are limited to polynomial bases and specific grids, which are usually equidistant
in the interior.

Recently, an optimization-based procedure for constructing FSBP operators on general node sets and function spaces has been proposed in \cite{glaubitz2025optimization}.
The work \cite{glaubitz2025optimization} focused on dense local operators typically applied in FE methods. It formulates the construction of FSBP operators
as an optimization problem using only the definition of an SBP operator. Since our construction
algorithms presented in this paper are based on this approach, and to set the notation, we briefly summarize the main idea here.

From now on, we focus on diagonal-norm FSBP
operators, i.e., $P$ is a diagonal matrix.
Let $S = (Q - Q^T)/2$ be the antisymmetric part of $Q$ and let $V$ and $V_x$ be the Vandermonde matrices of a basis $\{f_j\}_{j = 1}^K$ of $\F$ and the derivatives $\{f_j'\}_{j = 1}^K$ evaluated at $X$, i.e.,
$V_{ij} = f_j(x_i)$ and $(V_x)_{ij} = f_j'(x_i)$.
The key idea is to rewrite condition (i) from \Cref{def:FSBP} as $XW + BV/2 = 0$, where $X = [S, P]\in\R^{N \times 2N}$ is a matrix of the unknowns and $W = [V; -V_x]\in\R^{2N \times K}$ is known. In this setting,
$D = P^{-1} (S + B/2)$ being an $\F$-exact FSBP operator is equivalent to $X$ being a global minimizer of the constrained quadratic optimization problem
\begin{equation}\label{eq:optimization_problem}
	\min_{X \in \mathcal{X}}\|XW + BV/2\|^2
\end{equation}
with constraints
\begin{equation}\label{eq:constraint_set}
	\mathcal{X} = \{X = [S, P] \mid S^T = -S, P = \diag(p_1, \ldots, p_N), p_i > 0\}
\end{equation}
and $\|XW + BV/2\| = 0$. The constraints can be removed for diagonal-norm operators by parametrizing $P$ and $S$ by vectors $\boldsymbol{\rho} \in \R^N$ and $\boldsymbol{\sigma} \in \R^L$ as
\begin{equation}\label{eq:diag-sig}
	P(\boldsymbol{\rho}) = \diag(s(\rho_1), \ldots, s(\rho_N)),
\end{equation}
for the sigmoid logistic function $s(x) = \frac{1}{1 + e^{-x}}$, and
\begin{equation}\label{eq:skew-symmetric}
	S(\boldsymbol{\sigma}) = \begin{pmatrix}
		0 & \sigma_1 & \sigma_2 & \sigma_3 & \dots & \sigma_{N - 1} \\
		-\sigma_1 & 0 & \sigma_N & \sigma_{N + 1} & \dots & \sigma_{2N - 3} \\
		-\sigma_2 & -\sigma_N & 0 & \sigma_{2N - 2} & \dots & \sigma_{3N - 6} \\
		\vdots & \vdots & \vdots & \ddots & \vdots & \vdots \\
		-\sigma_{N - 2} & -\sigma_{2N - 4} & -\sigma_{3N - 7} & -\sigma_{4N - 11} & \ddots & \sigma_{L} \\
		-\sigma_{N - 1} & -\sigma_{2N - 3} & -\sigma_{3N - 6} & -\sigma_{4N - 10} & \dots & 0
	\end{pmatrix}\in\R^{N\times N},
\end{equation}
where $L = N(N - 1)/2$. The sigmoid function ensures that the positivity constraint $p_i > 0$ is automatically satisfied.
Substituting \eqref{eq:diag-sig} and \eqref{eq:skew-symmetric} into \eqref{eq:optimization_problem} yields the unconstrained
optimization problem
\begin{equation}\label{eq:optimization-problem-parametrized}
	\min_{\boldsymbol{\rho} \in \R^N, \boldsymbol{\sigma} \in \R^L} \|X(\boldsymbol{\rho}, \boldsymbol{\sigma}) W + BV/2\|^2.
\end{equation}
In principle, any matrix norm can be used as all norms are equivalent in a finite-dimensional vector space.
In our computational examples, we use the Frobenius norm.

\begin{remark}
	To solve the optimization problem, we employ the Julia \cite{bezanson2017julia} package
	Optim.jl \cite{morgensen2018optim}. We use the limited memory Broyden–Fletcher–Goldfarb–Shanno
	(LBFGS) algorithm, which needs the gradient of the optimization function, but, in contrast to the
	Newton method, uses an approximation of the Hessian matrix. We use automatic
	differentiation (AD) to compute the gradient.
	Note that the unconstrained optimization problem is not convex, which means that ---in theory--- the minimum might
	not be unique and the LBFGS algorithm might not converge to a global minimum. However, in our experiments this
	was usually no problem and using global solvers for non-convex problems or solving the (convex) constrained
	problem did not yield any better results and were usually less efficient. A more detailed analysis
	is of interest, but is left for future work.
\end{remark}

Note that the above construction procedure is general and can be used to construct FSBP operators for arbitrary node sets and function spaces.
However, generally, no additional properties are enforced on the resulting operators, which are often necessary in practical applications.
Examples, where this is problematic, are presented in the next section.

\section{Counterexample of FSBP operators}
\label{sec:counterexample}

To motivate the need for additional conditions on SBP operators beyond those specified in \Cref{def:FSBP}, we present some illustrative examples of SBP operators that satisfy
the standard SBP conditions but exhibit undesirable properties.

\begin{example}
	Consider the extreme case, where the differentiation matrix $D$ has rank one and the mass matrix $P$ is nearly singular:
	\begin{equation}
		D = \begin{pmatrix}
			-1 & 0 & \ldots & 0 & 1\\
			-1 & 0 & \ldots & 0 & 1\\
			\vdots & \vdots & \vdots & \vdots & \vdots\\
			-1 & 0 & \ldots & 0 & 1\\
			-1 & 0 & \ldots & 0 & 1
		\end{pmatrix},\quad
		P = \frac{1}{2}\diag(1, \varepsilon, \ldots, \varepsilon, 1).
	\end{equation}
	The above operator defined by $D$ and $P$ on $X = \{(i - 1)/(N - 1) \}_{i = 1, \ldots, N}$ can get arbitrarily close to an SBP operator for $\mathcal{F} = \spn\{1, x\}$ when $\varepsilon\to 0$.
	Although this operator numerically is a valid solution to the optimization problem \eqref{eq:optimization_problem}--\eqref{eq:constraint_set}, it is evident that
	the differentiation matrix is not a suitable discrete derivative in a non-periodic domain as it only uses information from the boundary nodes.
\end{example}

In the following, we present numerical demonstrations of SBP operators, which are exact for higher-order polynomials, but still fail to accurately solve the linear advection equation
\begin{equation}
	\partial_t u + a\partial_x u = 0,
	\quad t \in [0, 1.75], \quad x \in [-1, 1],
\end{equation}
even for smooth solutions. To this end, we construct global FSBP operators using the optimization-based approach described in \Cref{sec:background} for polynomial bases $\F = \spn\{x^i, i = 0, \ldots, d\}$ of
different degrees $d$ on $N=50$ equidistant nodes.
As the initial condition, we use $u_0(x) = \sin(k\pi x)$, set the advection velocity to $a=2$, and use different values for $k$. The spatial discretization is combined with a third-order strong-stability-preserving
Runge-Kutta method with five stages \cite{gottlieb2003strong}. We use a CFL number of $0.5$ for time integration. The results are shown in \Cref{fig:advection_solutions} for $k = 1$ (left) and $k = 2$ (right).
It can be observed that the numerical solution using the FSBP operators with polynomial degrees $d \leq 3$ is highly inaccurate, despite the fact that these SBP operators are exact for
polynomials of degree up to $d$. This is especially the case for increasing frequency in the initial condition, where even high-order FSBP operators yield inaccurate results. This suggests that
underresolved modes are not sufficiently damped.
Note that all numerical solutions are conservative up to machine precision and energy-stable due to the SBP property and the inclusion of constants in the
function space \cite{glaubitz2023summation}. This implies that energy stability alone is not sufficient to ensure accurate results.

\begin{figure}[tb]
	\centering
	\includegraphics[width=\textwidth]{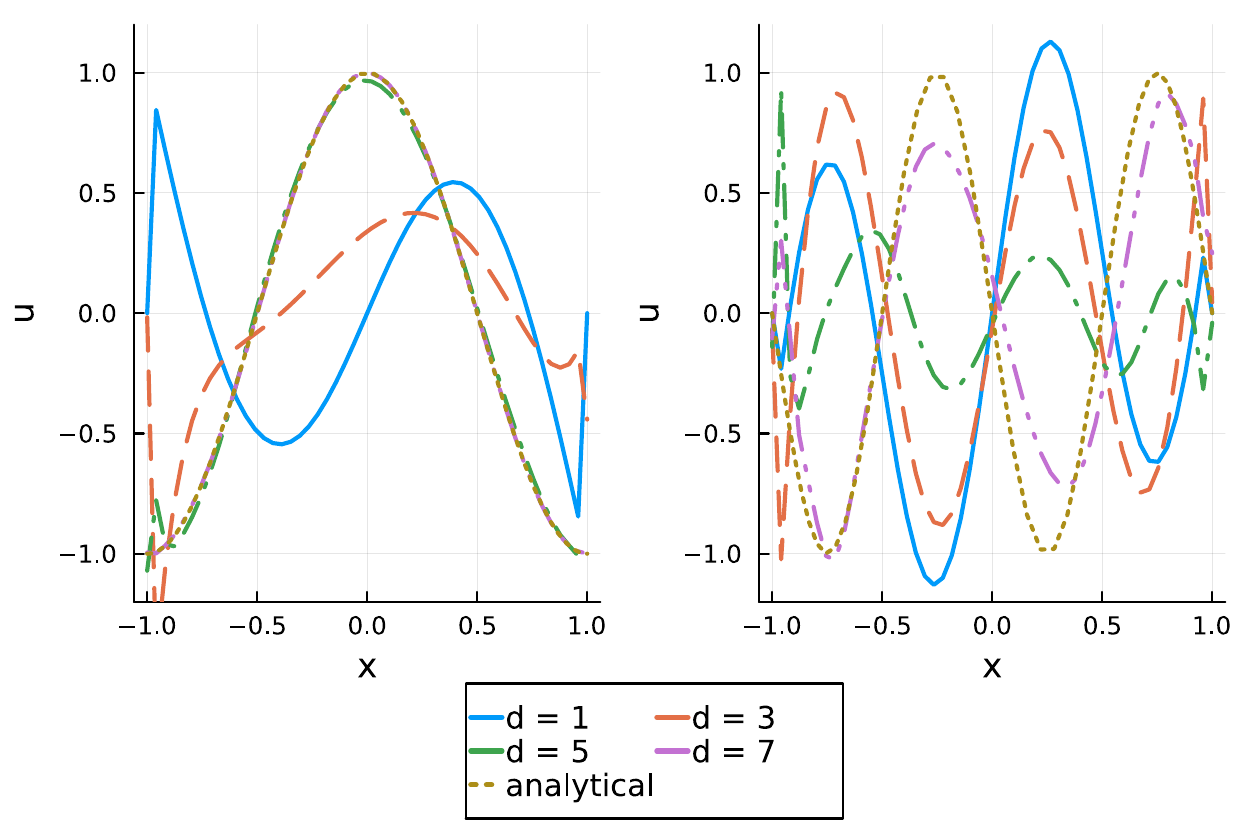}
	\caption{
		Numerical solutions of the linear advection equation using FSBP operators with different polynomial degrees $d$ on $N=50$ nodes and at $t=1.75$.
		The left subplot shows the solution for $k=1$ and the right subplot for $k=2$ in the initial condition $u_0(x) = \sin(k\pi x)$.
	}
	\label{fig:advection_solutions}
\end{figure}

\begin{table}
\caption{$L^2$- and $L^\infty$-errors for FSBP and FD-SBP operators with different degrees/orders for the linear advection equation with $N=50$ nodes, $k = 1$, and $t=1.75$.}
\begin{tabular}{rccccccccc}
  \toprule
  \textbf{Error} & \multicolumn{6}{@{}c@{}}{\textbf{FSBP}} & \multicolumn{3}{@{}c@{}}{\textbf{Classical FD}} \\
  \cmidrule{2-7}\cmidrule{8-10}
   & \textit{$d = 1$} & \textit{$d = 3$} & \textit{$d = 5$} & \textit{$d = 7$} & \textit{$d = 9$} & \textit{$d = 11$} & \textit{$p = 2$} & \textit{$p = 4$} & \textit{$p = 6$} \\
  \midrule
  \textbf{$L^2$} & 8.3e-01 & 4.7e-01 & 3.9e-02 & 1.9e-03 & 6.6e-05 & \textbf{2.1e-05} & 2.1e-02 & 3.2e-04 & 1.4e-04 \\
  \textbf{$L^\infty$} & 1.8e+00 & 9.8e-01 & 2.1e-01 & 6.8e-03 & 2.2e-04 & \textbf{3.2e-05} & 3.1e-02 & 5.8e-04 & 3.6e-04 \\
  \bottomrule
\end{tabular}
\label{table:advection_errors}
\end{table}

The discrete $L^2$- and $L^\infty$-errors for the different operators are summarized in \Cref{table:advection_errors} for $k = 1$.
As a comparison, we also include the errors using a classical fourth-order FD-SBP operator with a diagonal norm and standard boundary closures \cite{mattsson2004summation}.
Noticeable, the SBP operators constructed by the optimization
procedure yield more accurate results for larger bases including polynomials up to degree $d=9$ or $d=11$. The highest degree for which the optimization converges to a minimum with
zero residual in this setup was $d=12$. It is worth mentioning that the optimization problem is usually not significantly harder to solve for larger bases, and the computation time
depends more on the number of nodes $N$ than on the basis size. Typically, larger bases need some more iterations to converge.
We observed similar results for other function spaces, e.g., exponential, trigonometric, and radial basis functions.
We omit these results here for brevity.

\section{Enforcing additional structure of FSBP operators}
\label{sec:optimization}

As demonstrated in the previous section, SBP operators that satisfy the standard conditions from \Cref{def:FSBP}
are not necessarily well-suited for practical applications, e.g., numerically solving hyperbolic PDEs.
This raises the question: What additional properties should an SBP operator have to ensure accurate approximation of derivatives
and integrals?
The previous section illustrated that unresolved modes can lead to inaccurate solutions. Unresolved modes are connected to functions,
which are not correctly differentiated by the SBP operator. A subclass of unresolved modes are nullmodes, i.e.,
non-constant functions mapped to zero by the differentiation matrix. This suggests that the nullspace of the
differentiation matrix,
\begin{equation}
	\nul(D) = \{\boldsymbol{u} \in \R^N : D\boldsymbol{u} = 0\},
\end{equation}
plays a crucial role. Since the differentiation matrix should approximate the first derivative operator,
it is natural to require that its nullspace only contains constant functions, i.e., $\nul(D) = \spn\{1\}$. Such SBP operators are called \emph{nullspace consistent}
\cite{svard2019convergence,ranocha2019notes,linders2020properties,linders2022eigenvalue,williams2024full,glaubitz2024summation,barsukow2025stability}. The importance of nullspace consistency has been highlighted
in \cite{svard2019convergence}, where it was proven that it is a necessary condition for convergence of SBP schemes. Since we always
include the constant function in the function space $\F$ to ensure conservation \cite{glaubitz2023summation}, the SBP operator is
\emph{consistent} and problems can only arise from non-constant elements in the nullspace. A useful and easy-to-verify characterization of nullspace consistency
is given by the following lemma.
\begin{lemma}
	A consistent SBP operator is nullspace consistent if and only if $\rank(D) = N - 1$.
\end{lemma}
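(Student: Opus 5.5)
The argument is a rank–nullity computation. \emph{Consistent} here means that the SBP operator differentiates constants exactly, i.e.\ $D\mathbf{1} = \mathbf{0}$ with $\mathbf{1} = [1,\ldots,1]^T$. This holds because the constant function belongs to $\F$, so condition (i) of \Cref{def:FSBP} gives $D\mathbf{1} = \mathbf{1}' = \mathbf{0}$.

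First we record the inclusion $\spn\{\mathbf{1}\} \subseteq \nul(D)$, which follows from consistency. Since $\mathbf{1} \neq \mathbf{0}$, this gives $\dim \nul(D) \geq 1$. By the rank–nullity theorem for $D \in \R^{N\times N}$,
\begin{equation*}
	\rank(D) + \dim \nul(D) = N,
\end{equation*}
so always $\rank(D) \leq N-1$.

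We then prove the two implications.
\begin{itemize}
	\item[($\Rightarrow$)] Assume nullspace consistency, $\nul(D) = \spn\{\mathbf{1}\}$. Then $\dim\nul(D) = 1$, and rank–nullity gives $\rank(D) = N-1$.
	\item[($\Leftarrow$)] Assume $\rank(D) = N-1$. Then $\dim \nul(D) = 1$. The subspace $\spn\{\mathbf{1}\}$ of $\nul(D)$ also has dimension one. A subspace of equal finite dimension coincides with the whole space, so $\nul(D) = \spn\{\mathbf{1}\}$.
\end{itemize}

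There is no real obstacle. The only point needing care is to state explicitly that consistency means $D\mathbf{1}=\mathbf{0}$ and to justify it from the inclusion of constants in $\F$. Without this, the backward direction would fail, since a rank-$(N-1)$ matrix could have a one-dimensional nullspace not spanned by $\mathbf{1}$. Note that the SBP structure (conditions (ii)–(iii)) is not needed for this lemma.
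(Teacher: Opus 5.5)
Your proof is correct and follows the paper's argument: consistency gives $\mathbf{1}\in\nul(D)$, hence $\dim\nul(D)\ge 1$, and rank--nullity turns $\nul(D)=\spn\{\mathbf{1}\}$ into $\dim\nul(D)=1$, i.e.\ $\rank(D)=N-1$. Your equal-dimension argument for the backward direction just spells out a step the paper leaves implicit.
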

\begin{proof}
	Since the SBP operator is consistent, we have $1 \in \nul(D)$ and thus $\dim(\nul(D)) \ge 1$.
	By the rank-nullity theorem, we have $\dim(\nul(D)) + \rank(D) = N$. Therefore, $\nul(D) = \spn\{1\}$ is equivalent to
	$\dim(\nul(D)) = 1$ and thus to $\rank(D) = N - 1$.
\end{proof}
Another useful characterization of nullspace consistency is given in \cite{linders2022eigenvalue}.
\begin{lemma}
	An SBP operator $D = P^{-1}(S + B/2)$ is nullspace consistent if and only if the matrix
	\begin{equation}\label{eq:D-tilde}
		\tilde D = D + \nu P^{-1}\mathbf{e}_L\mathbf{e}_L^T
	\end{equation}
	is invertible for some $\nu \neq 0$.
\end{lemma}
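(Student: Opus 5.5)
We prove the lemma by relating $\nul(\tilde D)$ to $\nul(D)$ through the energy identity given by the SBP property. Since $P$ is invertible, $\tilde D$ is invertible if and only if $P\tilde D = Q + \nu \mathbf{e}_L\mathbf{e}_L^T$ is invertible, where $Q = S + B/2$. Throughout we assume, as the paper does, that the constant function lies in $\F$. Then $D\mathbf{1} = 0$, i.e. $Q\mathbf{1}=0$. We also assume $\nu > 0$. If $\nu<0$ is allowed, the same argument works with the sign of the energy term reversed, provided it stays definite. For a truly arbitrary sign, one could instead argue through the determinant as a polynomial in $\nu$; see the last step.

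\emph{Key identity.} Let $\mathbf{u}\in\nul(\tilde D)$. Then $Q\mathbf{u} = -\nu u_1 \mathbf{e}_L$. Multiply by $\mathbf{u}^T$ and use $\mathbf{u}^T S\mathbf{u}=0$ and $\mathbf{u}^T B\mathbf{u} = u_N^2-u_1^2$. This gives
\begin{equation*}
\tfrac12(u_N^2 - u_1^2) + \nu u_1^2 = 0, \quad\text{i.e.}\quad \tfrac12 u_N^2 + (\nu-\tfrac12)u_1^2 = 0 .
\end{equation*}
For $\nu > 1/2$ this forces $u_1=u_N=0$. Then $Q\mathbf{u}=0$, so $\mathbf{u}\in\nul(D)$ with $u_1=0$. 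This handles the direction "nullspace consistent $\Rightarrow$ $\tilde D$ invertible": the only such vectors in $\spn\{\mathbf{1}\}$ are zero, so $\tilde D$ is injective and hence invertible.

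\emph{Choice of $\nu$.} The weakest point of the energy argument is the range $0<\nu\le 1/2$, and negative $\nu$. There I would use a different argument. Suppose $D$ is nullspace consistent. Since $Q\mathbf{1}=0$ and $\rank Q = N-1$, the left nullspace of $Q$ is one-dimensional. Because $\mathbf{1}^T B\mathbf{1}=0$, we have $\mathbf{1}^T Q = \mathbf{1}^T(-Q^T + B) = (\mathbf{1}^T B)$, and $\mathbf{1}^T B = \mathbf{e}_R^T-\mathbf{e}_L^T$, which is not zero. So the left null vector is not simply $\mathbf{1}$. A rank-one update $Q+\nu\mathbf{e}_L\mathbf{e}_L^T$ of a rank-$(N-1)$ matrix with $Q\mathbf{1}=0$ is invertible if and only if $\mathbf{w}^T\mathbf{e}_L\,\mathbf{e}_L^T\mathbf{1}\neq 0$, where $\mathbf{w}$ spans the left nullspace. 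Equivalently, by the matrix determinant lemma, $\det(Q+\nu\mathbf{e}_L\mathbf{e}_L^T) = \nu\,\operatorname{adj}(Q)_{11}$, which is linear in $\nu$ because $\det Q=0$. The invertibility for $\nu>1/2$ from the previous step shows $\operatorname{adj}(Q)_{11}\ne0$, and hence invertibility for every $\nu\neq0$. This determinant-linearity step is the crux; it also removes the sign issue.

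\emph{Converse.} Suppose $D$ is not nullspace consistent. Then $\dim\nul(Q)\ge 2$, and we can choose a nonzero $\mathbf{u}\in\nul(Q)$ with $u_1=0$ by taking a combination of $\mathbf{1}$ and another null vector. Then $P\tilde D\mathbf{u} = Q\mathbf{u} + \nu u_1\mathbf{e}_L = 0$, so $\tilde D$ is singular for every $\nu$. Taking the contrapositive, invertibility of $\tilde D$ for some $\nu\neq0$ implies nullspace consistency. Combined with the previous steps, this proves the lemma. Equivalently, one can note that $\rank(Q)\le N-2$ implies $\operatorname{adj}(Q)=0$, so the determinant vanishes identically in $\nu$.
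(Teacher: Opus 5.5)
Your proof is correct. The paper gives no argument of its own and simply defers to \cite[Lemma 1]{linders2022eigenvalue}, so your route is necessarily different, and it is self-contained. The energy identity $\tfrac12 u_N^2+(\nu-\tfrac12)u_1^2=0$ gives invertibility for $\nu>1/2$. The matrix determinant lemma, $\det(Q+\nu\mathbf{e}_L\mathbf{e}_L^T)=\det Q+\nu\operatorname{adj}(Q)_{11}=\nu\operatorname{adj}(Q)_{11}$, carries this to every $\nu\neq0$. The null vector $\mathbf{v}-v_1\mathbf{1}$ settles the converse.

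Compared with the bare citation, your argument buys two things. First, it shows that ``for some $\nu\neq0$'' and ``for every $\nu\neq0$'' are equivalent readings of the statement. Second, it makes explicit the hypothesis $D\mathbf{1}=0$, which the lemma only inherits from the surrounding text but genuinely needs. For instance, with $N=2$ and $S=0$, $Q=B/2$ is invertible, so $\nul(D)=\{0\}$, yet $\tilde D$ is invertible for all $\nu\neq1/2$.

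Two minor points. Your aside that for $\nu<0$ the energy argument works ``with the sign reversed, provided it stays definite'' is not right. The form $\tfrac12u_N^2+(\nu-\tfrac12)u_1^2$ is indefinite for every $\nu<1/2$, so those $\nu$ are covered only by the determinant step, which you do supply. Also, the left-null-vector digression is unnecessary, and its stated reason ``because $\mathbf{1}^TB\mathbf{1}=0$'' should be $Q\mathbf{1}=0$.

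If you want to avoid the detour through $\nu>1/2$, show directly that a left null vector $\mathbf{w}$ of $Q$ has $w_1\neq0$:
\begin{itemize}
\item From $Q^T\mathbf{w}=0$ and $Q^T=B-Q$ you get $Q\mathbf{w}=B\mathbf{w}$.
\item Hence $\tfrac12\mathbf{w}^TB\mathbf{w}=\mathbf{w}^TQ\mathbf{w}=\mathbf{w}^TB\mathbf{w}$, so $w_1^2=w_N^2$.
\item If $w_1=0$, then $B\mathbf{w}=0$, so $\mathbf{w}\in\nul(Q)=\spn\{\mathbf{1}\}$ with $w_1=0$, i.e.\ $\mathbf{w}=0$, a contradiction.
\end{itemize}
Then $(Q+\nu\mathbf{e}_L\mathbf{e}_L^T)\mathbf{u}=0$ gives $0=\mathbf{w}^TQ\mathbf{u}=-\nu w_1u_1$, hence $u_1=0$. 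So $\mathbf{u}\in\nul(Q)=\spn\{\mathbf{1}\}$ with $u_1=0$, i.e.\ $\mathbf{u}=0$, for all $\nu\neq0$ at once.
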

\begin{proof}
	See \cite[Lemma 1]{linders2022eigenvalue}.
\end{proof}
It is known that nullspace consistency is not automatically guaranteed by the SBP conditions and explicit counterexamples are presented in \cite{svard2019convergence,linders2020properties,linders2022eigenvalue}.
In many cases an even stronger property than nullspace consistency, the \emph{eigenvalue property} \cite{linders2022eigenvalue}, is necessary, which states
that all eigenvalues of $\tilde D$ in \eqref{eq:D-tilde} have positive real part for any $\nu > 1/2$.

Coming back to the examples from \Cref{sec:counterexample}, we observe that the dense FSBP operators constructed there have ranks significantly smaller than $N - 1$
ranging from $3$ for $d = 1$ to $25$ for $d = 11$ (with $N = 50$). Analogously, the eigenvalue property is not satisfied for these operators with the number of eigenvalues
with non-positive real parts of $\tilde D$ usually being $\rank(D)$ or $\rank(D) + 1$. This indicates that many nullmodes exist, which can explain the poor performance,
especially for higher frequencies.

\begin{remark}
	An approach to avoid undesirable nullmodes and thus ensuring nullspace consistency for FSBP operators is to
	enforce the eigenvalue property during the construction of FSBP operators by including an additional constraint in the optimization problem
	\eqref{eq:optimization-problem-parametrized}, yielding
	\begin{equation}
		\begin{split}
		\min_{\boldsymbol{\sigma}\in\R^N, \boldsymbol{\rho}\in\R^L}\|X(\boldsymbol{\sigma}, \boldsymbol{\rho})W + BV/2\|^2\quad\text{s.t.}\\
		\Re(\lambda) > 0\quad\text{for all }\lambda \in \spec\left(P(\boldsymbol{\rho})^{-1}(S(\boldsymbol{\sigma}) + B/2) + \nu P(\boldsymbol{\rho})^{-1}\mathbf{e}_L\mathbf{e}_L^T\right),
		\end{split}
	\end{equation}
	where $\nu > 1/2$ is a fixed parameter, $\Re(\lambda)$ is the real part of the eigenvalue $\lambda$, and $\spec$ denotes the spectrum of a matrix.
	Notably, $D = P^{-1}(S + B/2)$ and the constraint thus ensure the eigenvalue property and hence also nullspace consistency.
	However, directly enforcing a condition on the eigenvalues in an optimization problem is significantly more challenging. That is why we consider two alternative
	approaches and discuss their impact on the nullspaces of the resulting FSBP operators.
\end{remark}

All numerical examples in this section use Trixi.jl \cite{ranocha2022adaptive,schlottkelakemper2025trixi} for solving the
hyperbolic PDEs and SummationByPartsOperatorsExtra.jl\footnote{\href{https://github.com/JoshuaLampert/SummationByPartsOperatorsExtra.jl}{https://github.com/JoshuaLampert/SummationByPartsOperatorsExtra.jl}}
for constructing and applying the SBP operators using the framework of SummationByPartsOperators.jl \cite{ranocha2021sbp}. For the time integration, we use
error-based adaptive explicit Runge-Kutta methods from OrdinaryDiffEq.jl \cite{rackauckas2017differentialequations} with a relative and absolute tolerance of $10^{-14}$ for the convergence tests
and $10^{-6}$ for the other examples.
All examples are reproducible using the code provided in our reproducibility repository \cite{glaubitz2026summationRepro}.

\subsection{Using sparse differentiation matrices}
\label{sub:optimization_unconstrained_block}
By defining the skew-symmetric part $S$ of the differentiation matrix as in \eqref{eq:skew-symmetric}, the resulting differentiation matrix $D$
will generally be dense. This can be desirable in some cases, e.g., because the accuracy is
not reduced at the boundaries. However, dense differentiation matrices are typically applied locally. Here, we consider how sparsity can be
integrated into the optimization-based construction procedure to obtain FSBP operators, which are applicable globally or on larger blocks.
Such operators are used, for instance, in global and multi-block FD methods as well as in spectral schemes.
Sparsity is a natural property for global differentiation matrices since derivatives act
locally and, as we will see below, it can promote nullspace consistency and thus greatly enhance accuracy. Additionally, it improves
performance of both the optimization problem, due to significantly fewer degrees of freedom. Furthermore, the application of
the operator becomes more efficient due to optimized computations, e.g. for matrix-vector products. Therefore, it is of interest to enforce
a special structure on the differentiation matrix as it is also the case for many classical
polynomial-based SBP operators. This structure can be formulated as a block-matrix with sub-blocks
in the upper left and lower right corners accounting for the boundaries and a banded structure in
the interior \cite{mattsson2004summation}. We can enforce this structure within the optimization
procedure by adding additional constraints such that, instead of \eqref{eq:skew-symmetric},
$S$ is written as
\begin{equation}\label{eq:skew-symmetric-block}
	S(\boldsymbol{\sigma}) = \begin{pmatrix}
		M_1 & C_1 & 0\\
		-C_1^T & A & C_2\\
		0 & -C_2^T & M_2
	\end{pmatrix}\in\R^{N\times N}.
\end{equation}
With a bandwidth of $b$ in the interior and a boundary matrix size $c\ge b$ (e.g., $c = 2b$), we have
$M_1(\boldsymbol{\sigma})\in\R^{c\times c}$ and $M_2(\boldsymbol{\sigma})\in\R^{c\times c}$
being skew-symmetric matrices (of the form \eqref{eq:skew-symmetric}),
\begin{equation}
	A(\boldsymbol{\sigma}) = \begin{pmatrix}
		0 & \dots & \ast & & & & \\
		\vdots & \ddots & \vdots & \ddots & & & \\
		\ast & \dots & 0 & \dots & \ast & & \\
		& \ddots & \vdots & \ddots & \ddots & \ddots & & \\
		& & \ast & \dots & 0 & \dots & \ast \\
		& & & \ddots & \vdots & \ddots & \vdots \\
		& & & & \ast & \dots & 0
	\end{pmatrix}\in\R^{(N - 2c)\times(N - 2c)},
\end{equation}
and
\begin{equation}
	C_1(\boldsymbol{\sigma}) = \begin{pmatrix}
		0 & 0\\
		\tilde C_1 & 0
	\end{pmatrix}\in\R^{c\times(N - 2c)},
\end{equation}
where the first block column is of size $b$ and the second of size $N - 2c - b$ as well as
\begin{equation}\label{eq:C1-structure}
	\tilde C_1(\boldsymbol{\sigma}) = \begin{pmatrix}
		\ast & & & \\
		\ast & \ast & & \\
		\vdots & \ddots & \ddots & \\
		\ast & \dots & \dots & \ast
	\end{pmatrix}\in\R^{b\times b}.
\end{equation}
The structure for $C_2$ is the same as for $C_1^T$. Here, every asterisk represents one
potentially non-zero element while respecting the anti-symmetry. In summary, we have two times
$c(c - 1)/2$ unknowns from $M_1$ and $M_2$, two times $b(b + 1)/2$ unknowns from $C_1$ and
$C_2$, and $b(N - 2c - b) + b(b - 1)/2$ unknowns from $A$ resulting in a total of $\tilde L = Nb + b(b + 1)/2 + c^2 - 2bc - c$
unknowns in the optimization problem for the differentiation matrix. That is, we reduce the number
of variables from $L = \O(N^2)$ to $\tilde L = \O(Nb)$. Note that for the construction in
\eqref{eq:skew-symmetric-block}--\eqref{eq:C1-structure} we need $N\ge 2c + b$.

In addition, classical FD-SBP operators with equidistant points in the interior have repeating stencils in $A$.
Moreover, we have $C_2 = \bar{C_1}$ and $M_2 = \bar{M_1}$, where the bar $\bar{\cdot}$ denotes permutation of
rows and columns. By this approach, we reduce the number of unknowns further to $c(c - 1)/2 + b$ variables. Since
we are interested in a framework with general (i.e., non-equidistant) nodes and general bases
(i.e., not translation-invariant), we allow the more general structure of non-repeating stencils.
In our reproducibility repository \cite{glaubitz2026summationRepro}, we provide two numerical examples, which
corroborate that for non-equidistant nodes with polynomial bases and for equidistant nodes with RBF bases
the optimization problem converges only for non-repeating stencils.

\begin{remark}
	The optimization-based construction framework is formulated in a general setting, which allows general sets of points
	and general bases, but it can also be used to reconstruct classical SBP operators. For instance, we have been
	able to reconstruct dense Gauß-Lobatto-Legendre operators and classical sparse FD-SBP operators
	by choosing appropriate nodes, bases, and structures for $D$. This includes the SBP operators from
	\cite{mattsson2004summation,mattsson2014optimal,mattsson2018boundary} and shows that our framework extends the existing
	polynomial-based one by being able to recover it for specific choices, while at the same time being able to generate more
	general SBP operators for other node distributions and bases.
\end{remark}

In the following, we use the sparsity-enforcing approach to construct sparse FSBP operators and use them to solve hyperbolic PDEs.
First, we consider a global operator on $N = 50$ equidistant points and apply it to the example from \Cref{sec:counterexample}. We use the polynomial basis $\mathcal{P}_2 = \spn\{1, x, x^2\}$
and the basis $\mathcal{T} = \spn\{1, x, \sin(\pi x), \cos(\pi x)\}$ with different bandwidths $b = 3, 4, 5, 6$ and $c = 2b$ and compare the resulting operators to the dense operators.
Note that for $\mathcal{P}_2$ and $b = 2$, we would recover the classical fourth order FD-SBP operator, but the optimization procedure does not yield an FSBP operator on $\mathcal{T}$ for a bandwidth of 2.
We observe empirically that for polynomial bases $\mathcal{P}_d$, the smallest bandwidth for which the optimization problem
converges successfully is $b = d$, which results in the $2d$-th order FD-SBP operator. As expected, the results improve significantly compared to the dense
operators as shown in \Cref{table:advection_errors_sparse} and smaller bandwidths yield better accuracy.

The most important advantage of the optimization-based construction is its ability to yield FSBP operators on arbitrary finite-dimensional function spaces, which can yield better results
than polynomial bases  \cite{glaubitz2023summation,glaubitz2024energy,glaubitz2025optimization}. In the above example, using the trigonometric basis $\mathcal{T}$
improves the accuracy compared to the polynomial basis up to degree $2$ for all bandwidths. In the case of $k = 1$, where the analytical solution is given by $\sin(\pi (x - at))$, the
error for the FSBP operator with the trigonometric basis is purely given by the time integration error (up to solver tolerances) since the solution is exactly represented by the function space.

\begin{table}
\caption{$L^2$- and $L^\infty$-errors for FSBP operators with different bandwidths $b$ for the linear advection equation with $N=50$ nodes, $k = 2$, and $t=1.75$ using a polynomial function space
$\mathcal{P}_2 = \spn\{1, x, x^2\}$ and a trigonometric function space $\mathcal{T} = \spn\{1, x, \sin(\pi x), \cos(\pi x)\}$.}
\begin{tabular}{rccccccccccccc}
  \toprule
  \textbf{Error} & \multicolumn{5}{@{}c@{}}{\textbf{FSBP with $\mathcal{F} = \mathcal{P}_2$}} & \multicolumn{5}{@{}c@{}}{\textbf{FSBP with $\mathcal{F} = \mathcal{T}$}} \\
  \cmidrule{2-6}\cmidrule{7-11}\cmidrule{12-14}
   & \textit{$b = 3$} & \textit{$b = 4$} & \textit{$b = 5$} & \textit{$b = 6$} & dense & \textit{$b = 3$} & \textit{$b = 4$} & \textit{$b = 5$} & \textit{$b = 6$} & dense \\
  \midrule
  \textbf{$L^2$} & 2.6e-02 & 7.0e-02 & 1.4e-01 & 2.5e-01 & 1.3e+00 & \textbf{1.2e-03} & 1.1e-02 & 2.1e-02 & 4.5e-02 & 1.2e+00 \\
  \textbf{$L^\infty$} & 6.6e-02 & 1.6e-01 & 2.9e-01 & 5.1e-01 & 2.1e+00 & \textbf{4.1e-03} & 2.3e-02 & 4.7e-02 & 9.7e-02 & 1.8e+00 \\
  \bottomrule
\end{tabular}
\label{table:advection_errors_sparse}
\end{table}

We also compare sparse and dense operators in a multiblock setting with $8$ blocks and $N = 15$ nodes per block for the initial condition $u_0(x) = \exp(-x^2/0.1)$ using $\mathcal{P}_3$
and $\mathcal{T}$ as basis on equidistant nodes. For the sparse operators, we choose a bandwidth of $b = 3$ and $c = 2b$. The results in \Cref{fig:errors_1d} show that the sparse operators
outperform the dense ones for both bases.

\begin{figure}[tb]
	\centering
	\includegraphics[width=\textwidth]{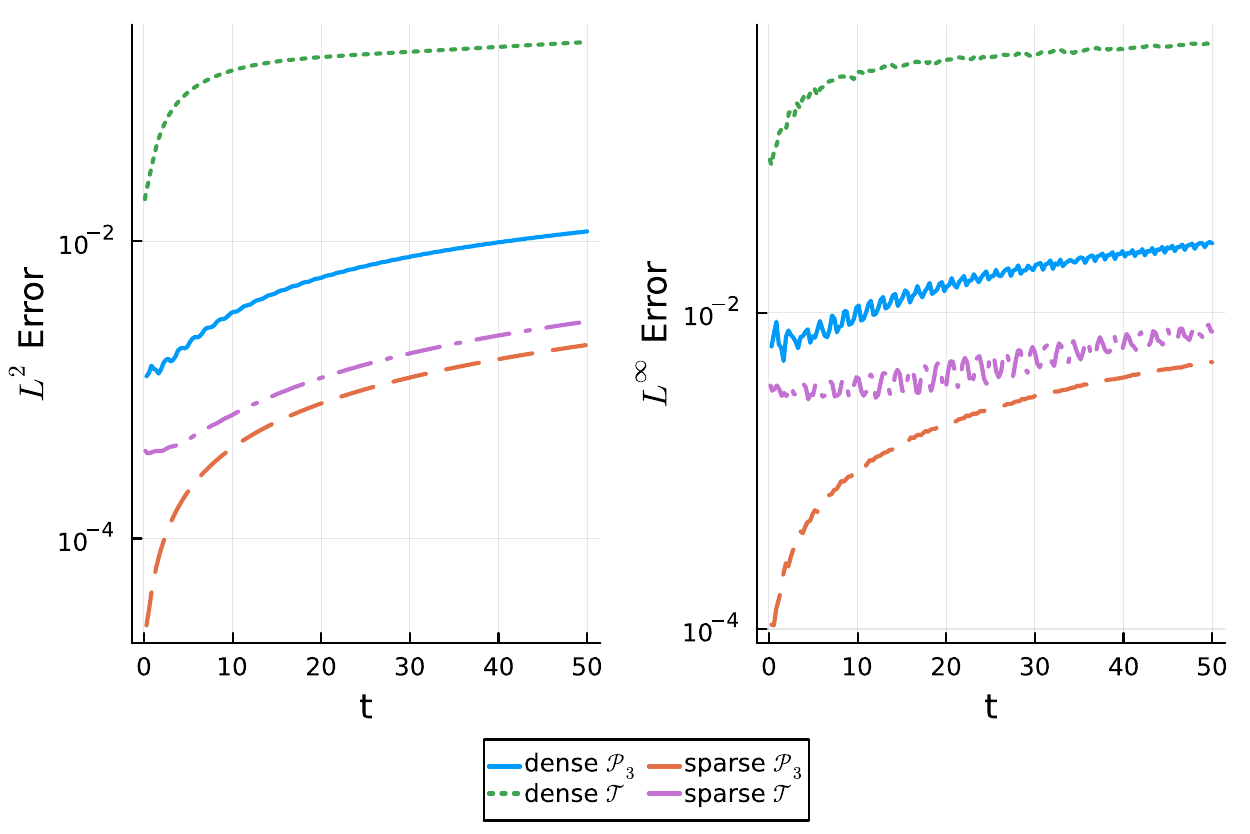}
	\caption{
		$L^2$- and $L^\infty$-errors for the one-dimensional linear advection equation using FSBP operators based on $\mathcal{P}_3 = \spn\{1, x, x^2, x^3\}$ and $\mathcal{T} = \spn\{1, x, \sin(\pi x), \cos(\pi x)\}$ on
		$8$ blocks with $N=15$ nodes per block. Sparse operators with bandwidth $b = 3$ are compared to dense operators.
	}
	\label{fig:errors_1d}
\end{figure}

Finally, we use the same sparse and dense FSBP operators in a tensor product setting on the domain $[-1, 1]^2$ to solve the two-dimensional compressible Euler equations
\begin{equation}\label{eq:Euler}
	\frac{\partial}{\partial t}
	\begin{pmatrix}
		\rho \\ \rho v_1 \\ \rho v_2 \\ \rho e
	\end{pmatrix}
	+
	\frac{\partial}{\partial x}
	\begin{pmatrix}
		\rho v_1 \\ \rho v_1^2 + p \\ \rho v_1v_2 \\ (\rho e + p) v_1
	\end{pmatrix}
	+
	\frac{\partial}{\partial x}
	\begin{pmatrix}
		\rho v_2 \\ \rho v_1v_2 \\ \rho v_2^2 + p \\ (\rho e + p) v_2
	\end{pmatrix}
	=
	\begin{pmatrix}
		s_1 \\ s_2 \\ s_3 \\ s_4
	\end{pmatrix},
\end{equation}
where $p = (\gamma - 1) ( \rho e - \rho (v_1^2 + v_2^2) / 2 )$ denotes the pressure, $\rho$ the density, $v_1$ and $v_2$ the velocities in the $x$ and $y$ directions,
$e$ the specific total energy, and $\gamma$ the ratio of specific heats.
We choose $\gamma = 1.4$, periodic boundary conditions, and the manufactured solution
\begin{equation}\label{eq:initial_condition_Euler}
\begin{aligned}
	\rho(\mathbf x, t) = c + A\sin(\omega (x_1 + x_2 - t)), \quad
	(\rho v_1)(\mathbf x, t) = \rho(\mathbf x, t), \\
	(\rho v_2)(\mathbf x, t) = \rho(\mathbf x, t), \quad
	(\rho e)(\mathbf x, t) = \rho(\mathbf x, t)^2.
\end{aligned}
\end{equation}
Here, the initial condition is given by the above expression at $t = 0$ and a source term $\mathbf{s} = (s_1, s_2, s_3, s_4)^T$ is added to the equations
to ensure that the manufactured solution is an exact solution of the resulting PDE. For the numerical flux, we choose the HLLC flux \cite{toro1994restoration}.
The $L^2$- and $L^\infty$-errors summed across all variables are shown for $c = 2$, $A = 0.1$, $\omega = \pi$
in \Cref{fig:errors_2d}, where we use global operators with $N=50$ equidistant nodes. Again, the sparse operators outperform the dense ones for both bases.
The dense operator for $\mathcal{P}_3$ crashes after around $t = 4$.

\begin{figure}[tb]
	\centering
	\includegraphics[width=\textwidth]{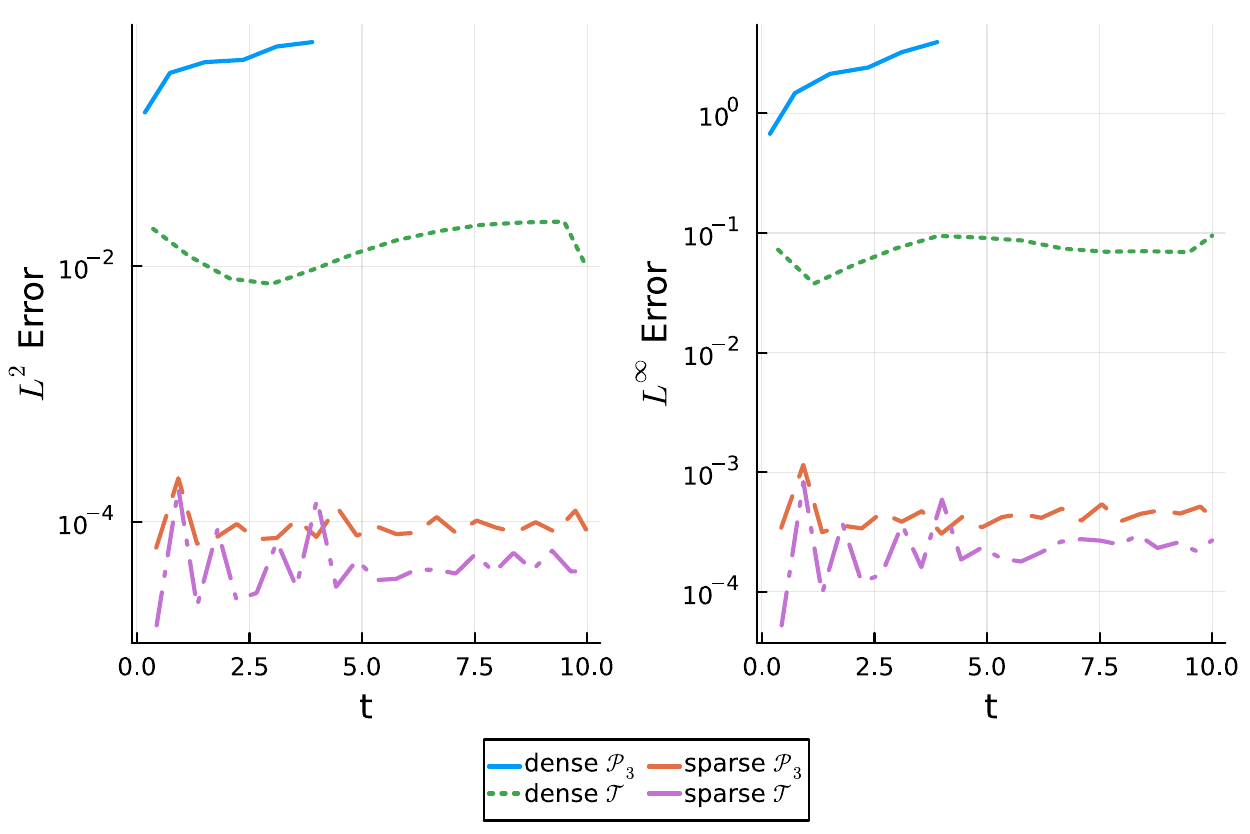}
	\caption{
		$L^2$- and $L^\infty$-errors summed across all variables for the two-dimensional compressible Euler equations using FSBP operators based on $\mathcal{P}_3 = \spn\{1, x, x^2, x^3\}$ and $\mathcal{T} = \spn\{1, x, \sin(\pi x), \cos(\pi x)\}$ with
		global operators on $N=50$ equidistant nodes. Sparse operators with bandwidth $b = 3$ are compared to dense operators.
	}
	\label{fig:errors_2d}
\end{figure}

In \Cref{table:eocs_compressible_euler} we show $L^2$-errors and experimental orders of convergence (EOCs) in the density for the compressible Euler equations
using sparse FSBP operators based on $\mathcal{P}_3$ and $\mathcal{T}$. The results confirm that high-order convergence is achieved for both bases.

We remark that the dense operators for $\mathcal{P}_3$ and $\mathcal{T}$ have a rank of $8$ each, and are thus not nullspace consistent. At the same time,
all sparse operators constructed in this section are numerically nullspace consistent with a rank of $N - 1$.

\begin{table}[htb]
	\small
	\caption{$L^2$-errors and EOCs for the compressible Euler equations and function spaces $\mathcal{F}=\mathcal{P}_3$ and $\mathcal{F}=\mathcal{T}$ using sparse FSBP operators with bandwidth $b = 3$ on $K$ blocks with $N = 15$ nodes each.}
	\begin{subtable}{.505\linewidth}
		\subcaption{$\mathcal{F}=\mathcal{P}_3$}
		\centering
		\begin{tabular}{cccccc}
			\toprule
			K & $\rho$ & $\rho v_1$ & $\rho v_2$ & $\rho e$ & EOC \\
			\midrule
			2 & 1.82e-04 & 1.67e-04 & 1.67e-04 & 4.61e-04 & --- \\
			4 & 8.75e-06 & 9.75e-06 & 9.75e-06 & 2.54e-05 & 4.38 \\
			8 & 6.23e-07 & 6.39e-07 & 6.39e-07 & 1.75e-06 & 3.81 \\
			16 & 6.19e-08 & 5.63e-08 & 5.63e-08 & 1.38e-07 & 3.33 \\
			\bottomrule
		\end{tabular}
	\end{subtable}%
	\begin{subtable}{.505\linewidth}
		\subcaption{$\mathcal{F}=\mathcal{T}$}
		\centering
		\begin{tabular}{cccccc}
			\toprule
			K & $\rho$ & $\rho v_1$ & $\rho v_2$ & $\rho e$ & EOC \\
			\midrule
			2 & 7.31e-04 & 6.99e-04 & 6.99e-04 & 1.46e-03 & --- \\
			4 & 1.31e-04 & 1.30e-04 & 1.30e-04 & 4.78e-04 & 2.48 \\
			8 & 3.05e-05 & 3.08e-05 & 3.08e-05 & 1.16e-04 & 2.11 \\
			16 & 6.41e-06 & 6.41e-06 & 6.41e-06 & 2.50e-05 & 2.25 \\
			\bottomrule
		\end{tabular}
	\end{subtable}%
	\label{table:eocs_compressible_euler}
\end{table}

\subsection{Minimizing the error for augmented basis}

Two strategies to increase the efficiency of FSBP operators are (i) keeping the basis fixed
and optimizing the node locations or minimizing the number of nodes or (ii) enlarging the basis, while
keeping the nodes fixed.
Inspired by the observation from the \Cref{sec:counterexample} that FSBP operators can become more accurate
when enlarging the basis, here we follow the idea to promote accuracy for functions that are not
included in the basis. We consider the case of an FSBP operator on $\F$ for which we want to increase the accuracy
by enlarging the basis by another set of functions $\G = \{g_1, \ldots, g_M\}$. However, there might not exist
an FSBP operator that is exact on $\F \cup \spn(\G)$ on the given set of nodes $X$. Therefore, we cannot ensure
exactness on $\G$. Instead, we thus minimize the error on $\G$, which corresponds to solving a regularized
optimization problem. We can interpret $\G$ as a set of previously unresolved modes, which are better
controlled if we additionally minimize their error.
More precisely, instead of solving the optimization problem from \eqref{eq:optimization_problem},
we solve
\begin{equation}\label{eq:optimization-regularized}
		\min_{X \in \mathcal{X}}\|XW + BV/2\|^2 + \sum_{k = 1}^M \lambda_k\|D\boldsymbol{g}_k - \boldsymbol{g}_k'\|^2
\end{equation}
for some augmented basis $\G = \{g_1, \ldots, g_M\}$ and weights $\lambda_k > 0$. The idea to minimize the error of an SBP
operator for higher-order polynomials has also been employed in \cite{mattsson2018boundary} to construct polynomial FD-SBP
operators.
In the regularized formulation, we cannot aim for the objective function to become zero, and thus cannot guarantee
exactness on $\F$. This would especially affect conservation since the constant function $1$ is usually included in $\F$
and needs to be differentiated exactly by an SBP operator to ensure conservation. To avoid these potential shortcomings of
\eqref{eq:optimization-regularized}, we instead formulate the first term in \eqref{eq:optimization-regularized} as a
constraint and minimize the error on $\G$ only:
\begin{equation}\label{eq:optimization-regularized-constrained}
		\min_{X \in \mathcal{X}}\sum_{k = 1}^M \lambda_k\|D\boldsymbol{g}_k - \boldsymbol{g}_k'\|^2 \quad \text{s.t.} \quad
		\|XW + BV/2\|^2 = 0.
\end{equation}

\begin{figure}[tb]
	\centering
	\includegraphics[width=\textwidth]{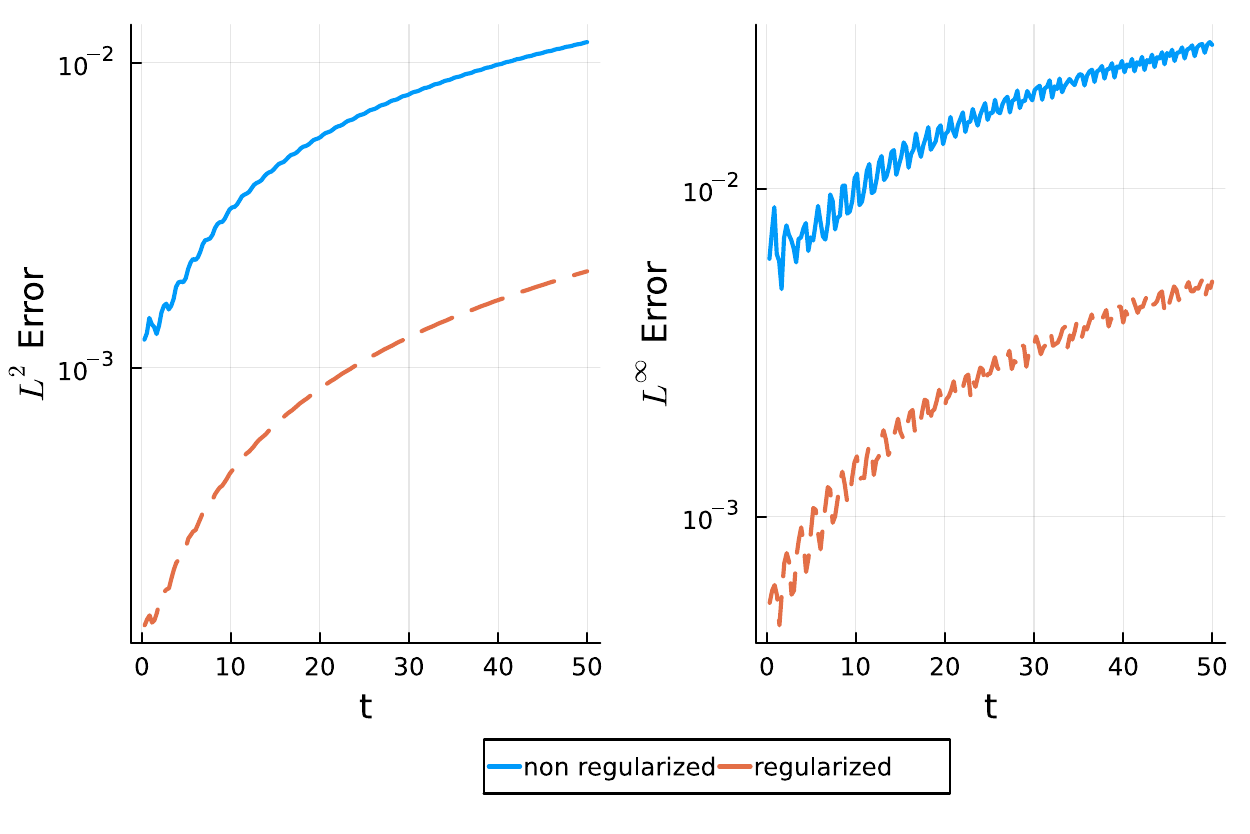}
	\caption{
		$L^2$- and $L^\infty$-errors for the one-dimensional linear advection equation using FSBP operators based on $\mathcal{P}_3 = \spn\{1, x, x^2, x^3\}$ without regularization
		and with regularization on $\mathcal{G} = \{\sin(\pi x), \cos(\pi x)\}$ on $8$ blocks with $N=15$ nodes per block.
	}
	\label{fig:errors_1d_regularization}
\end{figure}

To demonstrate the potential of this approach, we again consider the one-dimensional linear advection equation with the initial condition given by
$u_0(x) = \exp(-x^2/0.1)$ on $[-1, 1]$ and the two-dimensional compressible Euler equations with manufactured solution \eqref{eq:initial_condition_Euler}
using FSBP operators based on $\mathcal{P}_3 = \spn\{1, x, x^2, x^3\}$ on $8$ blocks with $N = 15$ equidistant nodes per block.
We choose $\G = \{\sin(\pi x), \cos(\pi x)\}$ to promote accuracy on these modes and set $\lambda_1 = \lambda_2 = 1$. Note that the optimization without
regularization does not converge for $\mathcal{P}_3 \cup \spn(\G)$ to a residual of zero on the given set of nodes, suggesting that there does not exist an FSBP operator exact on this augmented basis.
The $L^2$- and $L^\infty$-errors are shown in \Cref{fig:errors_1d_regularization} and \Cref{fig:errors_2d_regularization}. We observe that the regularized operators outperform
the non-regularized ones. Again, the regularized operators are numerically nullspace consistent with rank $N - 1 = 14$, whereas the non-regularized ones have a rank of $8$.

\begin{figure}[tb]
	\centering
	\includegraphics[width=\textwidth]{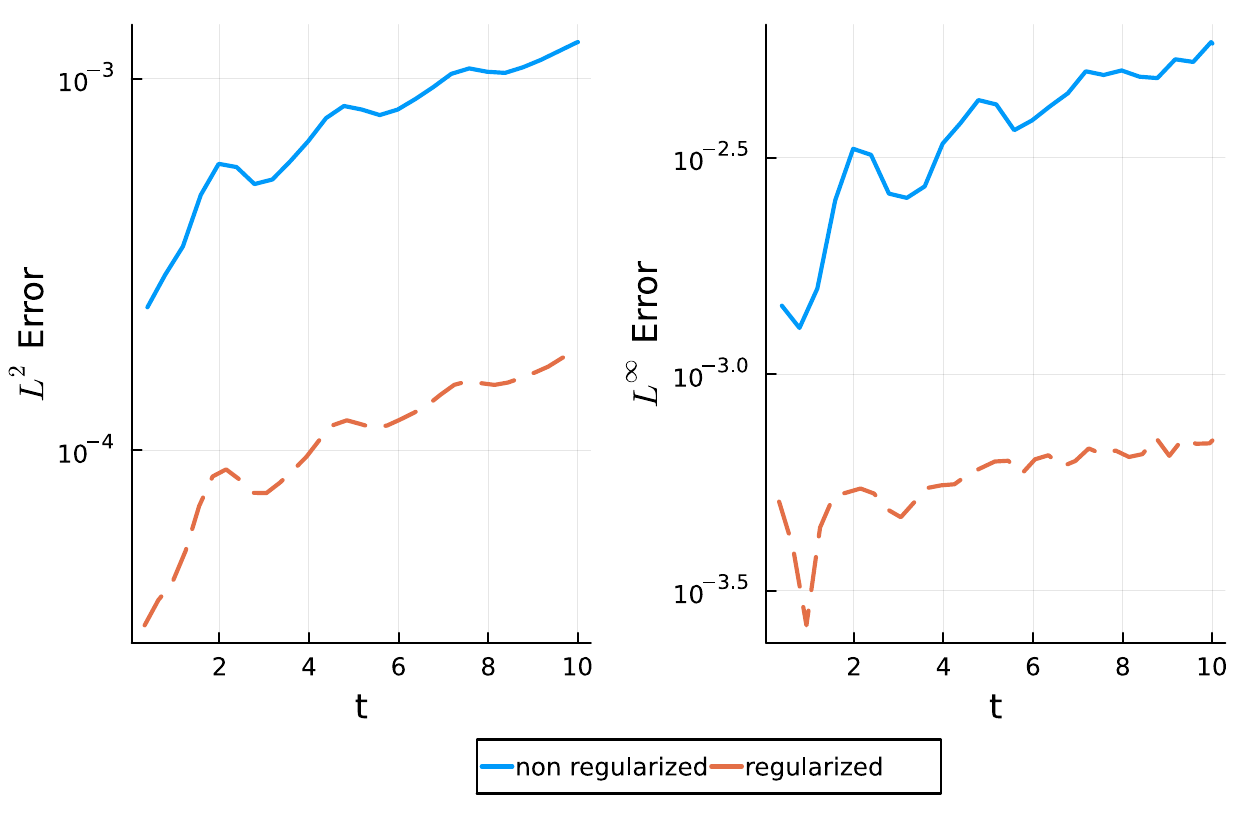}
	\caption{
		$L^2$- and $L^\infty$-errors for the two-dimensional compressible Euler equations using FSBP operators based on $\mathcal{P}_3 = \spn\{1, x, x^2, x^3\}$ without regularization
		and with regularization on $\mathcal{G} = \{\sin(\pi x), \cos(\pi x)\}$ on $8$ blocks with $N=15$ nodes per block.
	}
	\label{fig:errors_2d_regularization}
\end{figure}

\begin{remark}
	To solve the constrained optimization problem, we use the augmented Lagrangian method implemented in the Manopt.jl \cite{bergmann2022manopt,bergmann2024manopt} package.
\end{remark}


\section{Summary}
\label{sec:summary}

We have demonstrated that fulfilling the formal definition of an SBP operator alone is not sufficient to guarantee accurate
numerical solutions of hyperbolic conservation laws. By means of illustrative counterexamples, we showed that SBP operators may satisfy all standard algebraic
conditions yet still exhibit poor approximation properties and convergence behavior. These deficiencies were traced to structural shortcomings—most notably, the
existence of unresolved modes and the related lack of nullspace consistency.

To address these issues, we introduced two complementary strategies for improving the quality of SBP operators and introduced two variants of the optimization-based
construction procedure for FSBP operators presented in \cite{glaubitz2025optimization}. First, we enforced sparsity in the differentiation matrix,
which not only reduces computational cost and enables the usage of FSBP operators globally but also restores desirable numerical properties such as nullspace consistency
and improved error behavior. Second, we discussed regularized optimization formulations that minimize the derivative error for higher-degree functions beyond the exactness
space, thereby extending the accuracy of the constructed operators and diminishing unresolved modes.

Together, these results provide a clearer understanding of what additional structures are necessary for SBP operators to perform reliably in practice. The presented framework
is general and applicable to arbitrary node distributions and function spaces, making it a flexible foundation for developing new high-order discretizations. Future work will
focus on extending these results to meshfree methods for multi-dimensional problems, radial basis function SBP operators, and adaptive node distributions.


\section*{Acknowledgments}
JG acknowledges support by the Swedish Research Council (VR) Starting Grant \#2025-05370, the Zenith Career Development Grant \#26.07, and the National Academic Infrastructure for Supercomputing in Sweden (NAISS) grants \#2025/22-1599 and \#2024/22-1207.
AI and JL acknowledge the support by the Deutsche Forschungsgemeinschaft (DFG)
within the Research Training Group GRK 2583 ``Modeling, Simulation and
Optimization of Fluid Dynamic Applications''.
P\"O is supported by the DFG within SPP
2410, project 525866748 and under the grant 520756621.

\bibliographystyle{siamplain}
\bibliography{references}

\end{document}